\documentclass[10pt, conference]{IEEEtran}
\usepackage{makeidx}
\usepackage{amssymb,amsmath,amsbsy,mathdots}
\usepackage{graphicx}
\newcommand{\dddots}{\begin{parbox}{0cm}{\scriptsize$\ddots$}\end{parbox}}
\usepackage{url}
\hyphenation{op-tical net-works semi-conduc-tor}
\begin{document}
\title{Using cylindrical algebraic decomposition and local Fourier analysis to study numerical methods: two examples}
\author{\IEEEauthorblockN{Stefan Takacs}
\IEEEauthorblockA{Faculty for Mathematics, \\Research Group
Numerical Mathematics (Partial Differential Equations),\\
TU Chemnitz, Germany\\
Email: stefan.takacs@numa.uni-linz.ac.at}}
\thanks{The research was funded by the Austrian Science Fund (FWF): J3362-N25.}
\newtheorem{lemma}{Lemma}
\newcommand{\ee}{\textnormal{e}}
\newcommand{\ii}{\textnormal{i}}
\newcommand{\ul}[1]{\underline{#1}}
\newcommand{\mc}[1]{\mathcal{#1}}
\maketitle
\begin{abstract}
Local Fourier analysis is a strong and well-established tool for analyzing the convergence
of numerical methods for partial differential equations. The key idea of local Fourier
analysis is to represent the occurring functions in terms of a Fourier series and to
use this representation to study certain properties of the particular numerical method,
like the convergence rate or an error estimate.

In the process of applying a local Fourier analysis, it is 
typically necessary to determine the supremum of a more or less complicated term
with respect to all frequencies and, potentially, other variables. The problem of computing
such a supremum can be rewritten as a quantifier elimination problem, which can be solved
with cylindrical algebraic decomposition, a well-known tool from symbolic computation.

The combination of local Fourier analysis and cylindrical algebraic decomposition
is a machinery that can be applied to a wide class
of problems. In the present paper, we will discuss two examples. The first example
is to compute the convergence rate of a multigrid method. As second example we will see
that the machinery can also be used to do something rather different: We will compare
approximation error estimates for different kinds of discretizations.
\end{abstract}

\begin{IEEEkeywords}
      Multigrid; Fourier analysis; Cylindrical algebraic decomposition
\end{IEEEkeywords}

\IEEEpeerreviewmaketitle

\section{Introduction}

In this paper, we want to give some examples where the combination of
cylindrical algebraic decomposition (CAD), as a tool from symbolic computation,
and local Fourier analysis (LFA) yield helpful results.
LFA was introduced by A. Brandt, who
proposed to use Fourier series to analyze multigrid methods, cf.
\cite{Brandt:1977}. For a detailed introduction into LFA, see, e.g.,~\cite{Trottenberg:2001}.
LFA provides a framework
to determine sharp bounds for the convergence rates of multigrid methods and
other iterative solvers for problems arising from partial differential equations.
This is different to classical analysis, which
typically  yields qualitative statements only.
So classical convergence proofs for multigrid solvers, cf.~\cite{Hackbusch:1985},
show that the method is convergent and that the convergence rates are uniformly
bounded away from $1$ for all grid sizes, however there is no sharp, nor realistic
bound for the convergence rate given.
Besides the analysis of linear solvers, the idea of LFA can be carried over to other 
applications, like the computation of approximation error estimates or the computation
of inverse inequalities.

LFA can be justified rigorously only in special
cases, e.g., on rectangular domains with uniform grids and periodic
boundary conditions. However, results obtained with 
LFA can be carried over to more general cases, see,
e.g.~\cite{Brandt:1994}. In cases, where such a extension is not possible,
it can be seen as heuristic approach.

To compute the quantities of interest using LFA, typically one has to compute the
supremum of a more or less complicated term. The key for involving symbolic algorithms is a proper
reformulation of the problem of computing a supremum as a quantifier elimination
problem, which can be solved using a CAD algorithm, cf.~\cite{Collins:1975}.
Understanding the combination of LFA and CAD
as a machinery for analyzing a numerical method, we apply this machinery in the present
paper to two examples, keeping in mind that there are more.

The first example is related to the classical idea of analyzing multigrid solvers.
In Sec.~\ref{sec:1}, we will introduce a classical finite element framework for
the Laplace equation and analyze a standard Jacobi iteration for solving the discretized
system. There, we will introduce the reader to the finite element method to keep the paper
readable also for non-numerical analysts. In Sec.~\ref{sec:2},
we will extend the analysis to be able to learn about convergence properties of
a multigrid solver. The given example is rather simple (and could
be solved also without use of CAD, just per hand).
However, we refer to other examples, where the terms get much more complicated,
which make symbolic tools more interesting, cf., e.g.,~\cite{Pillwein:Takacs:2011}
and~\cite{Pillwein:Takacs:2012}.

The second example, which will be discussed in Sec.~\ref{sec:3}, is a new result. It is given to show
that the machinery of LFA can also be extended to analysis beyond
analyzing the convergence of a multigrid solver. We will see that the method can
also be used to develop approximation error estimates. Moreover, we will see that LFA can
capture any kind of discretization. To keep it simple, we will stay in the one dimensional
case, so the terms, that have to be resolved using CAD,
are rather easy. We will provide supplementary material that covers also the extension to two
dimensions. There, one can see that in this case the terms get much more complicated.

This list of examples is not complete. So, CAD has already been applied 
earlier in the analysis of (systems of) ordinary and partial differential-difference
equations,~\cite{Hong:Liska:Steinberg}, where the necessary
conditions for stability, asymptotic stability and well-posedness of the given systems
were transformed into statements on polynomial inequalities using Fourier or Laplace
transforms.

\section{Finite element method and a simple iteration scheme}\label{sec:1}

We start our analysis with a simple example, the Laplace equation. For a given function
$f$, we are interested in finding a function $u$ such that
\begin{equation}\label{eq:laplacestrong}
      -u''(x) = f(x) 
\end{equation}
is satisfied for all $x\in \Omega:=(0,1)$ and, moreover, the boundary condition $u(0)=u(1)=0$
holds.

The standard way of solving this, is to introduce a variational formulation.
Let $H^1(\Omega)$ be the standard Sobolev space of weakly differentiable functions
and $H^1_0(\Omega)\subset H^1(\Omega)$ be the space of functions that moreover
satisfy the boundary condition $u(0)=u(1)=0$.
Then, the strong formulation~\eqref{eq:laplacestrong} can be rewritten in weak formulation
as follows: Find $u\in V:=H^1_0(\Omega)$ such that
\begin{equation}\label{eq:var}
    \int_{\Omega} u'(x) v'(x) \textnormal{d}x = \int_{\Omega} f(x) v(x) \textnormal{d}x
\end{equation}
for all $v\in V$, cf. standard literature on finite elements, like~\cite{Brenner:Scott:1994}.

For any finite dimensional subset $V_k \subset V$, we can introduce a discretized problem:
Find $u_k\in V_k$ such that
\begin{equation}\label{eq:variational2}
    \int_{\Omega} u_k'(x) v_k'(x) \textnormal{d}x = \int_{\Omega} f(x) v_k(x) \textnormal{d}x
\end{equation}
for all $v_k\in V_k$. The approach to use the same space, $V_k$, for both, $u_k$ and $v_k$,
is called the Galerkin principle. This guarantees that $u_k$ is the orthogonal projection of the
exact solution $u\in V$ into $V_k$.

The easiest way to set up the space $V_k$ is to choose the Courant element: Here the domain
$\Omega$ is subdivided into intervals (in one dimension) or into triangles (in two dimensions).
We call these intervals or triangles \emph{elements}.
The space $V_k$ consists of all globally continuous functions that are linear on each element.

Each function in $V_k$ can be characterized just by prescribing its values on the end points
of the intervals or at the vertices of the triangles, respectively -- we call these points \emph{nodes}. This
fact can be used to construct a basis: The nodal basis of $V_k$ is the collection of all functions
$\varphi_{k,i} \in V_k$ that take the value $1$ on exactly one of the nodes and the value
$0$ on all of the other nodes. One such basis function is visualized in Fig.~\ref{fig:1}.

\begin{figure}
\begin{center}
  \includegraphics[scale=.5]{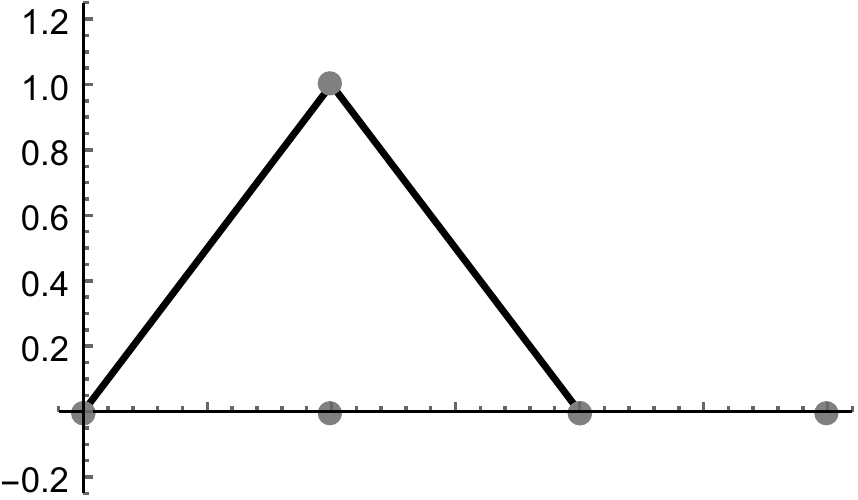}
\end{center}
\caption{Basis functions of standard Courant element}\label{fig:1}
\end{figure}

Having this basis, we can represent the functions $u_k$ and $v_k$ in terms of the basis:
\begin{equation*}
    u_k(x) = \sum_{i=1}^N u_{k,i} \varphi_{k,i}(x), \qquad v_k(x) = \sum_{i=1}^N u_{k,i} \varphi_{k,i}(x),
\end{equation*}
where the functions $u_k$ and $v_k$ can be represented by the coefficient vectors
$\ul{u}_k:=(u_{k,i})_{i=1}^N$ and $\ul{v}_k:=(v_{k,i})_{i=1}^N$. The variational
equality~\eqref{eq:variational2} can be rewritten in matrix-vector notation as follows:
\begin{equation}\label{eq:variational3}
    \ul{v}_k^T K_k \ul{u}_k = \ul{v}_k^T \ul{f}_k,
\end{equation}
for all $\ul{v}_k\in \mathbb{R}^N$,
where $K_k := (\int_{\Omega} \varphi_{k,j}'(x) \varphi_{k,i}'(x) \textnormal{d}x)_{i,j=1}^N$
and $\ul{f}_k := (\int_{\Omega} f_k(x) \varphi_{k,i}(x) \textnormal{d}x)_{i=1}^N$.
As \eqref{eq:variational3} is supposed to be satisfied for all $\ul{v}_k$,
it can be rewritten as follows: Find $\ul{u}_k $ such that
\begin{equation}\label{eq:matrvec}
    K_k \ul{u}_k = \ul{f}_k.
\end{equation}
To obtain a good approximation, it is often necessary to refine the intervals (or triangles)
used for the discretization of the partial differential equation.
In this case both, the number of unknowns and the condition number 
of the matrix $K_k$, grow. However, $K_k$ has a nice property: it
symmetric and positive definite.

A simple linear iteration scheme to solve a matrix-vector problem \eqref{eq:matrvec}
for $K_k$ being symmetric and positive definite, is
the (damped) Jacobi iteration. Assuming $\ul{u}_k^{(0)}$ to be some starting value,
the iteration procedure is given by
\begin{equation*}
      \ul{u}_k^{(m+1)} := \ul{u}_k^{(m)} + \tau (\mbox{diag}K_k)^{-1} ( \ul{f}_k - K_k \ul{u}_k^{(m)}),
\end{equation*}
where $\tau > 0$ is a given damping parameter. For $\tau = 1$, we obtain the standard Jacobi iteration.

As a next step, we are interesting in analyzing the convergence of the Jacobi
iteration scheme. So, using the exact solution $\ul{u}_k^*:= K_k^{-1} \ul{f}_k$, we obtain
\begin{equation*}
      \ul{u}_k^{(m+1)}-\ul{u}_k^* = (I - \tau (\mbox{diag}K_k)^{-1} K_k )(\ul{u}_k^{(m)}-\ul{u}_k^*)
\end{equation*}
and further
\begin{equation*}
      \|\ul{u}_k^{(m+1)}-\ul{u}_k^*\|_{K_k} \le \| I - \tau (\mbox{diag}K_k)^{-1} K_k\|_{K_k} \|\ul{u}_k^{(m)}-\ul{u}_k^*\|_{K_k},
\end{equation*}
where $\mc{S}_k:=I - \tau (\mbox{diag}K_k)^{-1} K_k$ is called the iteration matrix and $\|\cdot\|_{K_k}$ is
the vector norm $\|\ul{v}_k\|_{K_k}:=(\ul{v}_k^T K_k \ul{v}_k)^{1/2}$ or the associated matrix norm.
We have
\begin{equation*}
      \|\mc{S}_k\|_{K_k} = \| K_k^{1/2} (I - \tau (\mbox{diag}K_k)^{-1} K_k) K_k^{-1/2} \|,
\end{equation*}
where $\|\cdot\|$ is the standard Euclidean norm. As $K_k^{1/2} (I - \tau (\mbox{diag}K_k)^{-1} K_k) K_k^{-1/2}$
is symmetric, obtain further
\begin{equation*}
      \|\mc{S}_k\|_{K_k} = \rho( K_k^{1/2} (I - \tau (\mbox{diag}K_k)^{-1} K_k) K_k^{-1/2} ) = \rho(\mc{S}_k),
\end{equation*}
where $\rho(\cdot)$ is the spectral radius.

To determine the spectral radius, we use LFA: We compute the spectral radius
of $S_k$ explicitly for a special case. We assume to have
\begin{itemize}
	\item an infinitely large domain $\Omega$ (this neglects all influence coming from the boundary of the domain),
\end{itemize}
which is
\begin{itemize}
	\item discretized using an uniform (equidistant) grid.
\end{itemize}
For simplicity, here, we restrict ourselves to the one dimensional case. However,
LFA can also be worked out for two or more dimensions, cf.~\cite{Trottenberg:2001}.

For such an equidistant grid, we can  compute the stiffness matrix $K_k$ explicitly:
\begin{equation*}
  K_k = \frac{1}{h_k}
    \left(
      \begin{array}{cccccc}
  \dddots & \dddots\\
  \dddots & 2  & -1 & \\
	  & -1 & 2  & -1  \\
	  &    & -1 & 2  & -1 &  \\
	  &    &    & -1 & 2  & \dddots\;\; \\
	  &    &    &    & \dddots & \dddots\;\;
      \end{array}
    \right),
\end{equation*}
where $h_k$ is the grid size (length of the intervals).
	
As next step, we define for any frequency $\theta\in [0,2\pi)^d$ a vector of complex exponentials
\begin{equation*}
	\ul{\phi}_k(\theta) := ( \phi_{k,j}(\theta) )_{j\in \mathbb{Z}}:= ( \ee^{j \theta \ii})_{j\in \mathbb{Z}}
\end{equation*}
and observe that
\begin{equation}\label{eq:symb:K}
	K_k \ul{\phi}_k(\theta) = \underbrace{\frac{1}{h_k} ( - \ee^{-\theta \ii} + 2 
	- \ee^{\theta \ii}  )}_{\widehat{K_k}(\theta):=} \ul{\phi}_k(\theta)
\end{equation}
is satisfied, i.e., that $\ul{\phi}_k(\theta)$ is an eigenvector of $K_k$. In the LFA world,
the eigenvalue $\widehat{K_k}(\theta)$ is also called the symbol of $K_k$.

Based on the symbol of $K_k$, we can determine the symbol (eigenvalue) of the iteration
matrix $\mc{S}_k$. First note that $\mbox{diag}K_k = \tfrac{2}{h_k}I$ and
therefore $\widehat{\mbox{diag}K_k}(\theta) = \tfrac{2}{h_k}$. So, we obtain
\begin{align}
    \widehat{S_k}(\theta) &= 1 - \tau \frac{h_k}{2} \widehat{K_k}(\theta) \nonumber\\
      &= 1-\frac{\tau}{2} ( - \ee^{-\theta \ii} + 2 - \ee^{\theta \ii}  )
      &= 1-\tau ( 1 - \cos\theta  ).\label{eq:jsymb}
\end{align}
As we have mentioned above, we are interested in $\rho(\mc{S}_k)$. This spectral
radius can be expressed using the symbol:
\begin{align*}
      q(\tau):=\rho(\mc{S}_k ) = \hspace{-.1em} \sup_{\theta\in[0,2\pi)} |\widehat{S_k}(\theta )| 
	       = \hspace{-.1em}\sup_{\theta\in[0,2\pi)} |1-\tau ( 1 - \cos\theta  )|.
\end{align*}
By substituting the variable $\theta$ by $c:=\cos \theta$, we can completely 
eliminate the occurrence of trigonometric functions and obtain 
\begin{align*}
	 q(\tau):= \sup_{-1 \le c \le 1} |1-\tau ( 1 - c  )|.
\end{align*}
By definition, the supremum
is smallest upper bound, i.e., the smallest $\lambda$ such that
\begin{equation}\label{eq:quantif}
    \forall_{-1\le c \le 1} -\lambda \le 1-\tau(1-c) \le  \lambda.
\end{equation}
To determine the smallest $\lambda$ satisfying~\eqref{eq:quantif}, we have
to eliminate the quantifiers, i.e. to solve a quantifier elimination problem.

A quantifier elimination problem is the problem to find a quantifier free
formula that is equivalent to a quantified formula:
\begin{center}
	\fbox{
		\begin{minipage}[c][6.2em]{0.40 \textwidth}
			\emph{Quantified formula:}\\[.3em]
			$
				(Q_1)_{x_1}\,\ldots (Q_n)_{x_n}\, A(x_1,\ldots,x_n,y_1\ldots,y_m),
			$\\[.3em]
			where $Q_i\in\{\exists,\forall\}$ and
			$A$ is a finite boolean combination of
			polynomial inequalities
		\end{minipage}		
	}\\[1em]
	$\Leftrightarrow$\\[1em]
	\fbox{
		\begin{minipage}[c][5.2em]{0.40 \textwidth}
			\emph{Quantifier free formula:}
			$
				B(y_1\ldots,y_m),
			$\\[.3em]
			where $B$ is a finite boolean combination of
			polynomial inequalities.
		\end{minipage}		
	}
\end{center}
The solution of such a problem is possible using CAD, cf.~\cite{Collins:1975,Strzebonski:2000}.
By applying a CAD algorithm to \eqref{eq:quantif}, we obtain
\begin{align}
    & (\tau\le 0 \wedge \lambda \ge 1-2\tau) \vee
    (0<\tau\le 1 \wedge \lambda \ge 1)\nonumber\\
    & \vee (\tau> 1 \wedge \lambda \ge -1+2\tau)\label{eq:quantif2}
\end{align}
Here, the smallest $\lambda$ satisfying~\eqref{eq:quantif2} is piecewise
given by the terms $1-2\tau$, $1$ and $-1+2\tau$. So, we obtain
\begin{equation*}
    q(\tau) = \left\{ \begin{array}{ll}
	1-2\tau & \mbox{ for } \tau\le 0\\
	1	& \mbox{ for } 0 < \tau \le 1 \\
	-1+2\tau& \mbox{ for } 1 < \tau.
	\end{array}\right.
\end{equation*}
\begin{figure}
\begin{center}
  \includegraphics[scale=.5]{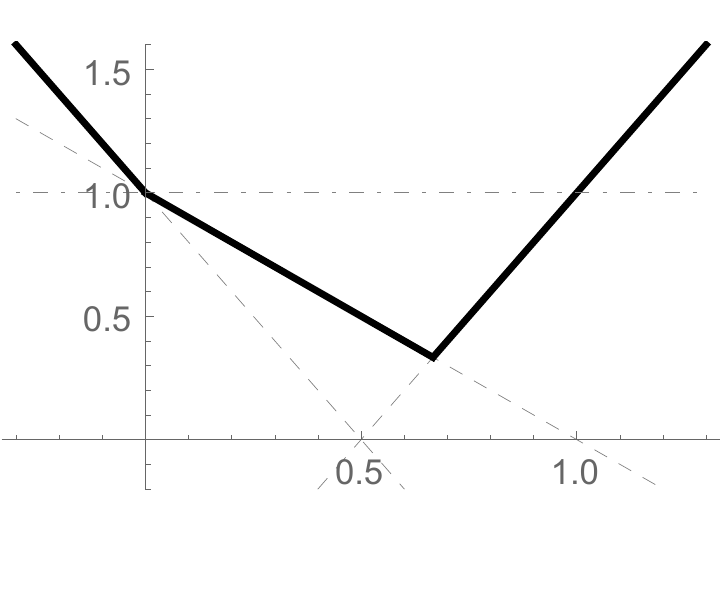}
\end{center}
\caption{Reduction of the high frequency modes as function of $\tau$}\label{fig:1a}
\end{figure}

We observe that there is no choice of $\tau$ such that $q(\tau) <1$. This reflects
knowledge on the Jacobi iteration (which is also true for other simple
linear iteration schemes): the convergence is not robust in the grid size $h_k$, so
the convergence rate cannot be bounded away from $1$. (Although, we did not have
an explicite dependence on the grid size $h_k$, the fact that we have considered an
unbounded domain $\Omega$ is equivalent to considering an infinitely small gird size.)

It is known by intuition that simple linear iteration schemes reduce high frequency error
modes. This statement can be formally expressed using LFA: Here, we
only consider $\theta \in [0,\pi/2) \cup [3\pi/2,\pi)$ or, equivalently, $0\le c \le 1$.
In this case, we obtain using the same arguments as above
\begin{align*}
	q_{SM}(\tau) =\sup_{0 \le c \le 1} |1-\tau ( 1 - c  )|
\end{align*}
Again, we can compute using CAD (or still per hand) that
\begin{align*}	
	q_{SM}(\tau)= \left\{ \begin{array}{ll}
	1-2\tau & \mbox{ for } \tau\le 0\\
	1-\tau	& \mbox{ for } 0 < \tau \le \tfrac23 \\
	-1+2\tau& \mbox{ for } \tfrac23 < \tau.
	\end{array}\right.
\end{align*}
This function is visualized in Fig.~\ref{fig:1a}. We see that $q_{SM}$ takes its minimal
value $\tfrac13$ for $\tau=\tfrac23$.

\section{Analysis of a multigrid solver}\label{sec:2}

In the last section, we have seen that the Jacobi iteration reduces the high frequency error modes.
The idea of a multigrid method is to use the fact that low frequency error modes can be
resolved well also on a coarse grid. So, we combine the Jacobi iteration (or any other simple linear
iteration scheme) with a coarse grid correction, which reduces the low frequency error modes. 

We assume to have for $k=1,2,3,\ldots$ a hierarchy of grid levels, where a grid level $k$
is obtained from grid level $k-1$ by uniform refinement, i.e., in the case of one dimension:
by subdividing each interval into two equally sized intervals. Starting from an
iterate~$\ul{x}^{(m)}_k$, the next iterate~$\ul{x}^{(m+1)}_k$ of the multigrid method on
grid level $k$ is given by the following three steps:
\begin{itemize}
        \item \emph{Pre-Smoothing:} Compute
              \begin{equation} \nonumber%\label{eq:sm:comp}
                   \ul{u}^{(m,1)}_k := \ul{u}^{(m)}_k + \tau (\mbox{diag } K_k)^{-1}
                                    \left(\ul{f}_k -K_k\;\ul{u}^{(m)}_k\right).
              \end{equation}
        \item \emph{Coarse-grid correction:}
                \begin{itemize}
                     \item Compute the defect 
                        $\ul{ f}_k -K_k\;\ul{u}^{(m,1)}_k$
                        and restrict it to grid level $k-1$:\;
                       \begin{equation}\nonumber
                              \ul{r}_{k-1}^{(m)} := P_{k-1}^T \left(\ul{ f}_k -K_k
                              \;\ul{u}^{(m,1)}_k\right).
                       \end{equation}
                     \item Solve the following coarse-grid problem approximatively:
                        \begin{equation}\label{eq:coarse:grid:problem}
                            K_{k-1} \,\ul{p}_{k-1}^{(m)} =\ul{r}_{k-1}^{(m)}.
                        \end{equation}
                     \item Prolongate $\ul{p}_{k-1}^{(m)}$  to the
                          grid level $k$ and add
                          the result to the previous iterate:
                          \begin{equation}\nonumber
                               \ul{u}_{k}^{(m,2)} := \ul{u}^{(m,1)}_k +
                                P_{k-1} \, \ul{p}_{k-1}^{(m)}.
                          \end{equation}
                \end{itemize}
        \item \emph{Post-Smoothing:} Compute
              \begin{equation} \nonumber%\label{eq:sm:comp}
                   \ul{u}^{(m+1)}_k := \ul{u}^{(m,2)}_k + \tau (\mbox{diag } K_k)^{-1}
                                    \left(\ul{f}_k -K_k\;\ul{u}^{(m,2)}_k\right).
              \end{equation}
\end{itemize}
As we have nested spaces, i.e., $V_{k-1}\subseteq V_k$, there is canonical
embedding from $V_{k-1}$ into $V_k$, which is chosen as prolongation
operator $P_{k-1}$.

If the problem~\eqref{eq:coarse:grid:problem} is solved exactly,
we obtain the two-grid method.
In practice, the problem~\eqref{eq:coarse:grid:problem} is
approximatively solved by applying one step (V-cycle)
or two steps (W-cycle) of the multigrid method, recursively. Only on
the coarsest grid level,~\eqref{eq:coarse:grid:problem} is 
solved exactly.

For computing the convergence rate of the multigrid solver,
we set up again the iteration matrix $\mc{G}_k$, which is the product of the iteration
matrix $\mc{S}_k$ of the damped Jacobi iteration,
of the iteration matrix $\mc{C}_k$ of the coarse-grid correction and, once more,
of the iteration matrix $\mc{S}_k$ of the damped Jacobi iteration:
\begin{equation*}
	\mc{G}_k = \mc{S}_k \mc{C}_k \mc{S}_k,
\end{equation*}
where
\begin{equation*}
	\mc{C}_k = I-P_{k-1} K_{k-1}^{-1}P_{k-1}^T K_k
\end{equation*}
and, as in the last section,
\begin{equation*}
	\mc{S}_k = I-\tau(\mbox{diag }K_k)^{-1} K_k.
\end{equation*}
As in the last section, we are interested in computing 
\begin{equation*}
	q(\tau) = \|\mc{G}_k\|_{K_k} = \rho(\mc{G}_k).
\end{equation*}

\begin{figure}
\begin{center}
  \includegraphics[scale=.5]{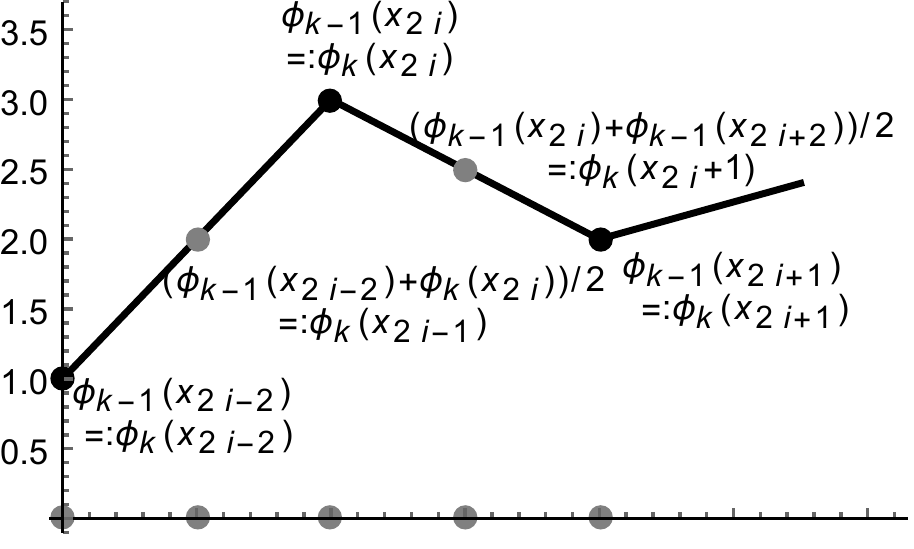}
\end{center}
\caption{Canonical embedding of $V_{k-1}$ into $V_k$}\label{fig:prolong}
\end{figure}

To be able to determine the symbol of the iteration matrix $\mc{G}_k$, we have to
take a closer look onto the prolongation operator $P_{k-1}$ first.
We recall that there is an isomorphism between $\mathbb{R}^N$, the space of
coefficient vectors, and the function space $V_k$. So, 
for each coefficient vector $\ul{\phi}_k(\theta)=(\phi_{k,j}(\theta))_{j\in\mathbb{Z}}$,
there is a function $\phi_{k}(\theta,\cdot)\in V_k$, which is assigned to it:
\begin{equation*}
  \phi_{k}(\theta,x)=\sum_{j\in\mathbb{Z}} \phi_{k,j}(\theta)\varphi_{k,j}(x).
\end{equation*}
By definition, $P_{k-1}$ is the canonical embedding operator, which is visualized in 
Fig.~\ref{fig:prolong}.

The next step is to represent the function $\phi_{k-1}(2\theta,x)$ as a linear combination
of functions on the fine grid. We observe, that this can be done using
the ansatz 
\begin{equation*}
      \phi_{k-1}(2\theta,x) = A \phi_{k}(\theta,x) + B\phi_{k}(\theta+\pi,x).
\end{equation*}
It is sufficient to consider the nodes $x_j = j h_k$ only. First we consider the
even nodes $x_{2j}$, which are also nodes of the coarse grid:
\begin{equation}\label{eq:xx1}
      \phi_{k-1}(2\theta,x_{2j}) = A \phi_{k}(\theta,x_{2j}) + B\phi_{k}(\theta+\pi,x_{2j}).
\end{equation}
As the $(\varphi_{k,i})_{i\in\mathbb{Z}}$, form a nodal basis, \eqref{eq:xx1} is equivalent to
\begin{equation}\nonumber
      \phi_{k-1,j}(2\theta) = A \phi_{k,2j}(\theta) + B\phi_{k,2j}(\theta+\pi),
\end{equation}
\begin{equation}\nonumber
      \ee^{j 2\theta  \ii} = A \ee^{2j \theta \ii} + B\ee^{2j(\theta+\pi) \ii}
\end{equation}
and, finally,
\begin{equation}\nonumber
     1 = A  + B.
\end{equation}
Now, we consider the odd nodes $x_{2j+1}$, which do not occur on the coarse grid:
\begin{equation}\label{eq:yy1}
      \phi_{k-1}(2\theta,x_{2j+1}) = A \phi_{k}(\theta,x_{2j+1}) + B\phi_{k}(\theta+\pi,x_{2j+1}).
\end{equation}
As the $(\varphi_{k,i})_{i\in\mathbb{Z}}$, form a nodal basis, \eqref{eq:yy1} is equivalent to
\begin{align*}
      &\frac12\left(\phi_{k-1,j}(2\theta)+\phi_{k-1,j+1}(2\theta)\right) \\
      &\qquad = A \phi_{k,2j+1}(\theta) + B\phi_{k,2j+1}(\theta+\pi)
\end{align*}
and
\begin{equation}\nonumber
     \frac12\left( \ee^{j2\theta  \ii}+\ee^{2 (j+1)\theta \ii} \right) = A \ee^{ (2 j+1)\theta \ii} + B\ee^{(2 j+1)(\theta+\pi)  \ii}
\end{equation}
and, finally,
\begin{equation}\nonumber
     \underbrace{\frac12\left( \ee^{-\theta \ii}+\ee^{\theta \ii} \right)}_{\cos(\theta)=} = A  - B.
\end{equation}
We obtain $A=\tfrac12 (1+\cos(\theta))$ and $B=\tfrac12 (1-\cos(\theta))$, which can be observed also
in Fig.~\ref{fig:intergrid}.
This allows to introduce the symbol of the prolongation operator:
\begin{align*}
 \widehat{P_{k-1}}(\theta) & = \frac{1}{2} \left(\begin{array}{c}
      1+\cos(\theta) \\
      1-\cos(\theta)
   \end{array} \right).
\end{align*}
\begin{figure}
\begin{center}
  \includegraphics[scale=.42]{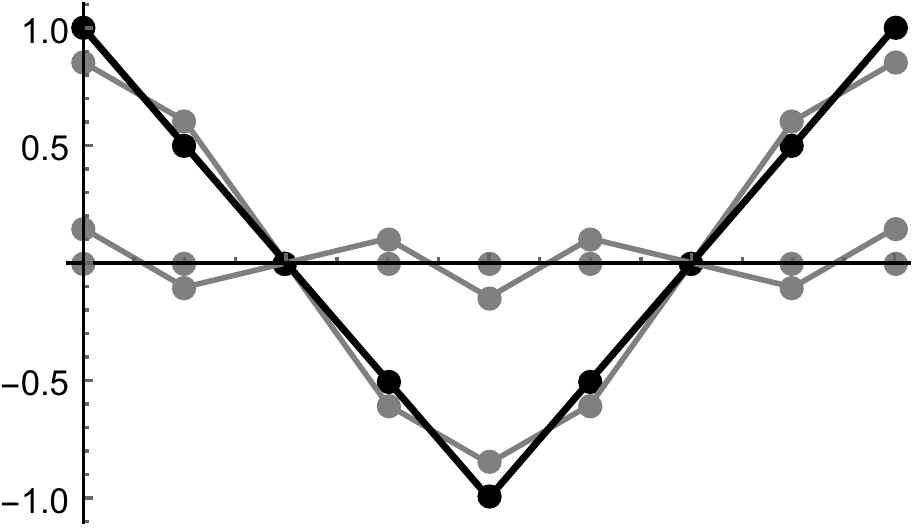}
\end{center}
\caption{Coarse-grid function $\phi_{k-1}(2\theta,x)$ in black and the two components
$\tfrac12(1+\cos(\theta)) \phi_k(\theta,x) $ and $\tfrac12(1-\cos(\theta)) \phi_k(\theta+\pi,x)$
in gray. }\label{fig:intergrid}
\end{figure}

Here, the symbol cannot be understood as eigenvalue anymore.
However, for all $\theta = [0,2\pi)$, the prolongation operator $P_{k-1}$ maps the linear
span, spanned by
\begin{equation}\label{eq:basis0}
  \ul{\phi}_{k-1}(2\theta)
\end{equation}
to the linear span, spanned by
\begin{equation}\label{eq:basis}
  \ul{\phi}_k(\theta) \quad \mbox{and}\quad  \ul{\phi}_k(\theta+\pi),
\end{equation}
and the restriction operator $P_{k-1}^T$ maps the linear
span, spanned by~\eqref{eq:basis}, to the linear span, spanned by~\eqref{eq:basis0}.

Having this, we can set up the symbol for the two-grid operator $\mc{G}_k$. We make
use of the fact that the multiplication of $\mc{G}_k$  with a vector in the linear span, given by
the basis~\eqref{eq:basis}, maps into the same linear span.
So, we have to set up
the symbol of~$\mc{G}_k$ with respect to the two dimensional basis~\eqref{eq:basis}. 

The symbol of $S_k$ has been a scalar in the last section. This means that
every frequency was preserved by the action of $S_k$. If we represent the symbol
of $S_k$ with respect to the basis~\eqref{eq:basis}, we just obtain a diagonal
symbol:
\begin{equation*}
	\widehat{\mc{S}_k}(\theta) = \left(
		\begin{array}{cc}
			\widehat{S_k}(\theta) \\ & \widehat{S_k}(\theta+\pi)
		\end{array}
	\right),
\end{equation*}
where $\widehat{S_k}(\theta)$ is as defined in~\eqref{eq:jsymb}.
Exactly the same way, we obtain the symbol $\widehat{\mc{K}_k}(\theta)$ based on
$\widehat{K_k}(\theta)$, given in~\eqref{eq:symb:K}.
Using this, we can determine the symbol of $\mc{C}_k$,
\begin{equation*}
    \widehat{\mc{C}_k}(\theta) = I -  \widehat{P_{k-1}}(\theta) [\widehat{K_{k-1}}(\theta)]^{-1}\widehat{P_{k-1}}(\theta)^*\widehat{\mc{K}_k}(\theta),
\end{equation*}
where $A^*$ is the conjugate complex of $A^T$. Consequently, the symbol of $\mc{G}_k$ is
\begin{equation*}
    \widehat{\mc{G}_k}(\theta) = \widehat{\mc{S}_k}(\theta)\widehat{\mc{C}_k}(\theta)\widehat{\mc{S}_k}(\theta).
\end{equation*}
Here, the computation of $\widehat{\mc{G}_k}(\theta)$ and of $\rho(\widehat{\mc{G}_k}(\theta))$ is straight-forward.
We obtain:
\begin{equation*}
    \rho(\widehat{\mc{G}_k}(\theta)) = |(\tau-1)^2+\tau(3\tau-2)\cos^2(\theta)|.
\end{equation*}
As in the last section, we are again interested in computing the supremum
\begin{equation*}
    q(\tau) = \rho(\mc{G}_k) = \sup_{\theta\in[0,2\pi)} |(\tau-1)^2+\tau(3\tau-2)\cos^2(\theta)|,
\end{equation*}
where we again substitute $\cos\theta$ by $c$ and obtain
\begin{equation*}
    q(\tau) = \sup_{c\in[-1,1]} |(\tau-1)^2+\tau(3\tau-2)c^2|.
\end{equation*}
Also here, we can resolve the supremum using a CAD algorithm (or, still, per hand) and obtain
\begin{equation*}
    q(\tau) = \left\{
	\begin{array}{ll}
		1-4\tau+4\tau^2 & \mbox{ for } \tau < 0 \\
		1-2\tau+\tau^2 & \mbox{ for } 0\le\tau < \tfrac23 \\
		1-4\tau+4\tau^2 & \mbox{ for } \tfrac23 \le \tau. \\
	\end{array}
      \right.
\end{equation*}
This function is shown in Fig.~\ref{fig:1b}. We see that $q$ takes its minimal value
$\tfrac19$ for $\tau=\tfrac23$.
\begin{figure}
\begin{center}
  \includegraphics[scale=.5]{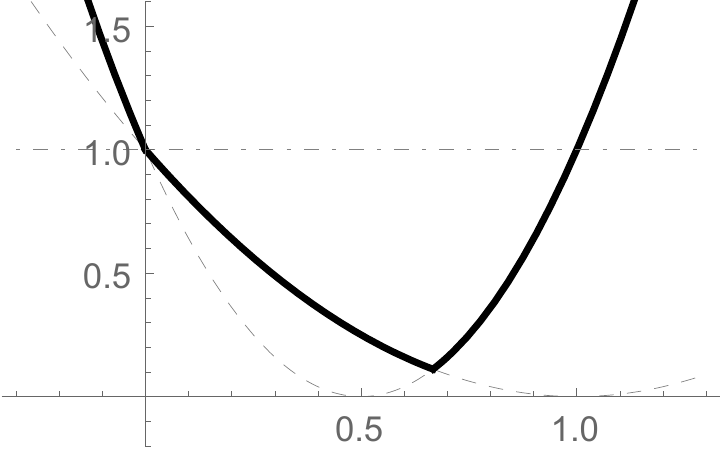}
\end{center}
\caption{Convergence rate of the multigrid solver as a function of $\tau$}\label{fig:1b}
\end{figure}

So far, all computations had been so easy such that it would have been possible to do them
per hand. However, the methodology presented in this section can be carried over to more complex
(and more interesting) problems. The first extension would be to consider two or
more dimensions. Here, one could represent everything use a tensor-product structure,
cf.~\cite{Trottenberg:2001}. Consequently, one has to deal with tuples of $d$ frequencies for
$d$ dimensional spaces. Also in this case, the $\theta_i$ can be substituted by $c_i:=\cos(\theta_i)$
and solved as discussed in this session. However, the complexity of the expressions (particularly in
terms of the polynomial degree) grows very fast if $d$ is increased.

Besides that, the presented methodology can be extended to non-standard problems. This is 
of practical use because the convergence analysis has to be worked out for each problem class,
separately. Here, LFA can be of great help.

One example where the presented approach has been applied in this fashion was in a
in a joint work with V.~Pillwein\footnote{Research Institute for Symbolic Computation,
Johannes Kepler University Linz, Austria}, cf.~\cite{Pillwein:Takacs:2011,Pillwein:Takacs:2012},
where LFA and CAD have been used to compute convergence rates of
a multigrid solver for a \emph{system of PDEs} which characterizes the solution of an
optimal control problem. There, not only the robustness of the convergence rates in the
grid size $h_k$, but also the robustness of the convergence rates in a regularization
parameter, which is part of the problem description, was of interest and could be studied.
The supplementary material, that came with the cited paper,
is available in the web\footnote{\url{http://www.risc.jku.at/people/vpillwei/sLFA/}}.
The author wants to refer the reader, which is interested in analyzing multigrid convergence,
to that material.

In the following of the present paper, the author wants to draw the reader's attention to another application of LFA
that is also of interest in numerical analysis: the estimation of approximation
error estimates.

\section{Estimate the approximation error}\label{sec:3}

In this section,
we are interested in comparing estimates of the approximation error
\begin{equation*}
	\inf_{u_k \in V_k} \|u -u_k\|_{L^2(\Omega)}
\end{equation*}
for different kinds of discretizations. One of the discretizations will be the Courant
element, two more will be introduced below. Here and in what follows $\|\cdot\|_{L^2(\Omega)}$
is the standard $L^2$-norm, i.e., $\|f\|_{L^2(\Omega)}^2:=\int_{\Omega} f^2(x) \mbox{d}x$

One important approximation error estimate reads as follows:
\begin{equation*}
	\inf_{u_k \in V_k} \|u -u_k\|_{L^2(\Omega)}^2 \le C_A h_k^2 |u|_{H^1(\Omega)}^2
\end{equation*}
for all $u\in L^2(\Omega)$, where $C_A>0$ is a constant, $h_k$ is the grid size and 
$|u|_{H^1(\Omega)}:=\|u'\|_{L^2(\Omega)}$.
For classical discretizations, it is well-known that such
an estimate exists. However, often there is no realistic bound for the constant $C_A$.
So, it might be of interest to compute an realistic
(not necessarily sharp) upper bound for the constant $C_A$ for discretizations of
interest.

The approximation error can be bounded from above using an interpolation
error $\|u -\Pi_k u\|_{L^2(\Omega)}$, where $\Pi_k:H^1(\Omega)\rightarrow V_k$ is an
arbitrarily projection operator. So, it suffices to estimate
\begin{equation}\label{eq:0}
	\|u -\Pi_k u\|_{L^2(\Omega)}^2 \le C_A h_k^2 |u|_{H^1(\Omega)}^2
\end{equation}
for any projection operator $\Pi_k$. Using the following lemma, we show
\eqref{eq:0} for $\Pi_k$ being the $H^1$-orthogonal projection.
\begin{lemma}
	Let for all grid levels $k\in\mathbb{N}$, the operator $\Pi_k$ be the $H^1$-orthogonal
	projection form $H^1(\Omega)$ into $V_k$.
	Assume that for all $k$ the following \emph{quantitative} estimate on two consecutive grids is satisfied:
	\begin{equation}\label{eq:1}
	    \| (I-\Pi_k) u_{k+1} \|_{L^2(\Omega)}^2 \le C_A h_k^2 |u_{k+1}|_{H^1(\Omega)}^2 
	\end{equation}
	for all $u_{k+1}\in V_{k+1}$.
	Moreover, we assume to know \emph{qualitatively} that
	\begin{equation}\label{eq:2}
	    \| (I-\Pi_k) u \|_{L^2(\Omega)} \rightarrow 0 \mbox{ for } k\rightarrow \infty
	\end{equation}
	for all $u \in L^2(\Omega)$. Then the following estimate is satisfied:
	\begin{equation*}
	    \| (I-\Pi_k) u \|_{L^2(\Omega)}^2 \le 4 C_A h_k^2 |u|_{H^1(\Omega)}^2
	\end{equation*}
	for all $u \in L^2(\Omega)$.
\end{lemma}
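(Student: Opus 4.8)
The plan is to bootstrap the one-level estimate~\eqref{eq:1} to the global estimate by a telescoping decomposition along the grid hierarchy, exploiting that the spaces are nested, $V_j\subseteq V_{j+1}$, and that uniform refinement halves the mesh size, $h_{j+1}=\tfrac12 h_j$. Throughout I would read the $H^1$-orthogonal projection $\Pi_k$ as the projection that is orthogonal with respect to the semi-inner product $a(u,v):=\int_\Omega u'(x)v'(x)\,\textnormal{d}x$, so that $|u|_{H^1(\Omega)}^2=a(u,u)$. It suffices to treat $u\in H^1(\Omega)$, since for $u\in L^2(\Omega)\setminus H^1(\Omega)$ the right-hand side is infinite and the claim is trivial.

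First I would record two structural facts. Because $V_j\subseteq V_{j+1}$, the nested projections satisfy $\Pi_j\Pi_{j+1}=\Pi_j$; consequently the increment $w_j:=(\Pi_{j+1}-\Pi_j)u$ lies in $V_{j+1}$ and obeys $\Pi_j w_j=0$, i.e.\ $(I-\Pi_j)w_j=w_j$. Moreover the increments are mutually $a$-orthogonal, so for every finite $M$ a Pythagoras identity gives $\sum_{j=k}^{M}|w_j|_{H^1(\Omega)}^2=|\Pi_{M+1}u|_{H^1(\Omega)}^2-|\Pi_k u|_{H^1(\Omega)}^2$; since the $a$-orthogonal projection is non-expansive in the $H^1$-seminorm (by Cauchy--Schwarz, $|\Pi_m u|_{H^1(\Omega)}\le|u|_{H^1(\Omega)}$), this is bounded by $|u|_{H^1(\Omega)}^2$, and in particular $|w_j|_{H^1(\Omega)}\le|u|_{H^1(\Omega)}$ for every $j$.

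Next I would use the qualitative assumption~\eqref{eq:2}, i.e.\ $\Pi_j u\to u$ in $L^2(\Omega)$, to pass to the limit in the telescoping sum $\sum_{j=k}^{M}w_j=\Pi_{M+1}u-\Pi_k u$ and obtain $u-\Pi_k u=\sum_{j\ge k}w_j$ with convergence in $L^2(\Omega)$. The quantitative assumption~\eqref{eq:1}, applied on each level $j$ to $w_j\in V_{j+1}$ together with $(I-\Pi_j)w_j=w_j$, yields $\|w_j\|_{L^2(\Omega)}\le\sqrt{C_A}\,h_j\,|w_j|_{H^1(\Omega)}$. Combining the triangle inequality, this per-level bound, the uniform bound $|w_j|_{H^1(\Omega)}\le|u|_{H^1(\Omega)}$, and the geometric sum $\sum_{j\ge k}h_j=h_k\sum_{m\ge 0}2^{-m}=2h_k$, I would estimate
\begin{equation*}
\|u-\Pi_k u\|_{L^2(\Omega)}\le\sum_{j\ge k}\|w_j\|_{L^2(\Omega)}\le\sqrt{C_A}\,|u|_{H^1(\Omega)}\sum_{j\ge k}h_j=2\sqrt{C_A}\,h_k\,|u|_{H^1(\Omega)},
\end{equation*}
and squaring produces exactly the asserted constant $4C_A$.

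The main obstacle is the sharp per-level bound $|w_j|_{H^1(\Omega)}\le|u|_{H^1(\Omega)}$: it is what lets the geometric series be summed against a single factor $|u|_{H^1(\Omega)}$, whereas a crude triangle-inequality bound $|w_j|_{H^1(\Omega)}\le2|u|_{H^1(\Omega)}$ would only deliver the constant $16C_A$. This step relies on interpreting $\Pi_k$ as the energy (Ritz) projection, orthogonal in $a(\cdot,\cdot)$, so that the increments are genuinely orthogonal in the $H^1$-seminorm; working instead with the full $H^1$ inner product would introduce an $L^2$-contribution on the right-hand side and break the bound. Care is also needed in justifying the $L^2$-convergence of the infinite telescoping series, which is precisely the role played by the qualitative hypothesis~\eqref{eq:2}.
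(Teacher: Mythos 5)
Your proof is correct and follows essentially the same route as the paper's: a telescoping decomposition into the increments $(\Pi_{m+1}-\Pi_m)u$, a per-level application of~\eqref{eq:1} combined with non-expansiveness of the energy projection in the $H^1$-seminorm, and the geometric series $\sum_{j\ge k} h_j = 2h_k$ yielding the constant $4C_A$, with the qualitative assumption~\eqref{eq:2} controlling the tail. The only (cosmetic) differences are that the paper truncates the sum at a finite level $K$ with an $\epsilon$-remainder and lets $\epsilon\rightarrow 0$, where you pass to an infinite series converging in $L^2(\Omega)$, and that you derive the per-level seminorm bound via $a$-orthogonality of the increments rather than directly from the stability $|\Pi_{m+1}u|_{H^1(\Omega)}\le|u|_{H^1(\Omega)}$.
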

\begin{proof}
  The proof is based on a simple telescoping argument.
  Due to \eqref{eq:2}, for any $\epsilon > 0 $ there is some $K>0$ such that
  $\| (I-\Pi_K) u \|_{L^2(\Omega)} < \epsilon |u|_{H^1(\Omega)}$.
  Now, we obtain due to the triangular inequality, \eqref{eq:1} and the fact that the $H^1$-orthogonal projection
  is stable in $H^1(\Omega)$, i.e., $|\Pi_ku|_{H^1(\Omega)}\le|u|_{H^1(\Omega)}$,
  \begin{align*}
    & \| (I-\Pi_k) u \|_{L^2(\Omega)} \\
    & \le \| (I-\Pi_K) u \|_{L^2(\Omega)} + \sum_{m=k}^{K-1} \| (I-\Pi_m) \Pi_{m+1} u \|_{L^2(\Omega)} \\
    & \le \left( \epsilon + \sum_{m=k}^{K-1} C_A^{1/2} h_m \right) |u|_{H^1(\Omega)} =:\Psi.
  \end{align*}
  As $h_m = 2^{k-m} h_{k}$, we obtain using the summation formula for the geometric series that
    $\Psi \le (\epsilon + 2 C_A^{1/2} h_k) |u|_{H^1(\Omega)}$ and for $\epsilon\rightarrow 0$ the desired result.
\end{proof}

The statement \eqref{eq:2} is well-known for all standard discretizations. However, there might not be
a good estimate for $C_A$. So, we are interested in the results by this lemma. The estimate~\eqref{eq:1}
can be treated using LFA. We can rewrite~\eqref{eq:1} in matrix-vector notation
as follows:
\begin{equation*}
    \| (I-P_k K_k^{-1} P_k^T K_{k+1}) \ul{u}_{k+1} \|_{M_{k+1}}^2 \le C_A h_k^2 \|\ul{u}_{k+1}\|_{K_{k+1}}^2,
\end{equation*}
where $M_k :=(m_{i,j})_{i,j=1}^N:= (\int_{\Omega} \varphi_{k,j}(x) \varphi_{k,i}(x) \textnormal{d}x)_{i,j=1}^N$
is the mass matrix.  Here, the upper bound is obtained using the matrix norm:
\begin{equation*}
    C_A^{1/2} = \frac{1}{h_k} \left\|M_{k+1}^{1/2} (I-P_k K_k^{-1} P_k^T K_{k+1}) K_{k+1}^{-1/2} \right\| .
\end{equation*}
Using the definition of the Euclidean norm and the fact that $(I-P_k K_k^{-1} P_k^T K_{k+1})^2=(I-P_k K_k^{-1} P_k^T K_{k+1})$, we obtain
\begin{align*}
    C_A & = \frac{1}{h_k^2} \rho\big(\underbrace{ M_{k+1} (I-P_k K_k^{-1} P_k^T K_{k+1}) K_{k+1}^{-1} }_{\mc{G}_{k+1}:=}\big).
\end{align*}
Here, again, the spectral radius can be determined using the symbol
\begin{align}
    &C_A = \sup_{\theta\in[0,2\pi)}  \rho\Big( \widehat{\mc{G}_{k+1}}(\theta)\Big),\mbox{ where}\nonumber\\
    & \widehat{\mc{G}_{k+1}}(\theta):=\frac{1}{h_k^2}\widehat{\mc{M}_{k+1}}(\theta)  \widehat{\mc{C}_{k+1}}(\theta) \left(\widehat{\mc{K}_{k+1}}(\theta)\right)^{-1},\nonumber\\
    &\widehat{\mc{C}_{k+1}}(\theta):= \left(I-\widehat{P_k}(\theta) \left(\widehat{K_k}(\theta)\right)^{-1} \widehat{P_k}(\theta)^* \widehat{\mc{K}_{k+1}}(\theta)\right).\nonumber
\end{align}

As we have mentioned above, we are interested in computing~$C_A$ for different
discretizations. The details can be found in an accompanying Mathematica notebook,
which is available in the web\footnote{\url{http://www.numa.uni-linz.ac.at/~stefant/J3362/slfa/}},
the main ideas will be given in the following three subsections.

\subsection{The Courant element}

The symbols $\widehat{\mc{K}_{k+1}}(\theta)$ and $\widehat{P_k}(\theta)$ for the Courant element have already
been determined in the last section. The mass matrix $M_k$
has also a tridiagonal form. The symbol can be computed completely analogous as for the stiffness
matrix:
\begin{equation*}
    \widehat{\mc{M}_{k+1}}(\theta) =\left(\begin{array}{cc}\widehat{M_{k+1}}(\theta)\\&\widehat{M_{k+1}}(\theta+\pi)\end{array}\right),
\end{equation*}
where
\begin{equation*}
    \widehat{M_{k+1}}(\theta) = \frac16(\ee^{-\theta\ii}+4+\ee^{\theta\ii}).
\end{equation*}
Based on this, we can derive
\begin{equation*}
    \widehat{\mc{G}_{k+1}}(\theta) = \frac{1}{12} \left(
      \begin{array}{cc}
	  2 + \cos\theta & -2 + \cos\theta\\
	  -2 - \cos\theta & 2 - \cos\theta
      \end{array}
    \right).
\end{equation*}
The eigenvalues of $\widehat{\mc{G}_{k+1}}(\theta)$ are $0$ and $\tfrac13$. As this is already independent of
$\theta$, we immediately obtain that for the Courant element $C_A = \tfrac13$ is satisfied.

\subsection{A $P^2$-spline discretization}

We can set up the same framework also for other discretizations, like the discretization
with splines. Here, assume that $V_k$ is the space of all continuously
differentiable functions, which are piecewise polynomials of degree $2$. One possible
basis for $V_k$ is the basis of B-splines:
\begin{equation*}
    \varphi_{k,i}(x) =  \left\{
	\begin{array}{lr}
	      \tfrac{1}{2h_k^2} (x - x_{i-1})^2 & \mbox{for } x_{i-1} \le x < x_{i} \\
	      \tfrac{3}{4} - \tfrac{1}{4h_k^2}(2x - x_{i}-x_{i+1})^2\hspace{-.6cm} & \mbox{for } x_{i} \le x < x_{i+1} \\
	      \tfrac{1}{2h_k^2} (x - x_{i+2})^2 & \mbox{for } x_{i+1} \le x < x_{i+2}\\
	      0 & \mbox{otherwise,}
	\end{array}
    \right.
\end{equation*}
where $x_{i} = i h_k$, see Fig.~\ref{fig:2} for a visualization of such a basis function.
\begin{figure}
\begin{center}
  \includegraphics[scale=.5]{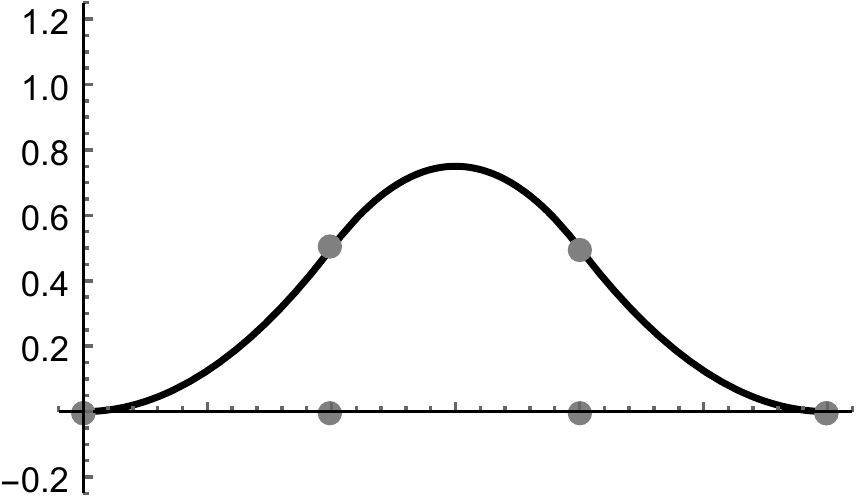}
\end{center}
\caption{Basis functions for the $P^2$-spline discretization}\label{fig:2}
\end{figure}

For the B-splines, we can again compute the integrals that are necessary to
set up the mass matrix $M_k$. As the support of the B-splines
is larger than the support of the basis functions of the Courant element, 
we obtain a band matrix with a bandwidth of $5$, with $m_{i,i}= \tfrac{66}{120}h_k$,
$m_{i,i\pm 1}= \tfrac{26}{120}h_k$ and $m_{i,i\pm 2}= \tfrac{1}{120}h_k$.
Also for this case, we can determine the symbol
\begin{equation*}
  \widehat{M_k}(\theta) = \frac{h_k}{120}\left(
    \ee^{-2\ii \theta} + 26\ee^{-\ii \theta} + 66 + 26\ee^{\ii \theta} +  \ee^{2\ii \theta}
    \right).
\end{equation*}
We can set up the the stiffness matrix $K_k$ and its symbol in a completely analogous way and obtain
\begin{equation*}
    \widehat{K_k}(\theta) = \frac{1}{6 h_k}\left(
   - \ee^{-2\ii \theta} -2\ee^{-\ii \theta} + 6 -2\ee^{\ii \theta} -  \ee^{2\ii \theta}
    \right).
\end{equation*}
For setting up the symbol of the prolongation operator $P_{k-1}$, it is sufficient
to solve again the equations~\eqref{eq:xx1} and~\eqref{eq:yy1}. For details, we refer
to the Mathematica notebook. The overall symbol $\widehat{\mc{G}_{k+1}}(\theta)$ is again
just obtained by multiplying the individual symbols. The eigenvalues
of $\widehat{\mc{G}_{k+1}}(\theta)$ are $0$ and
\begin{equation}\label{eq:largerev}
   \frac{-51 + 14 \cos(2 \theta) + \cos(4 \theta)}{40 (-2 + \cos(\theta)) (2 + \cos(\theta)) (2 + \cos(2 \theta))}.
\end{equation}
This second eigenvalue can be rewritten using the replacement $\cos\theta\rightarrow c$
as rational function, where the terms $\cos(2\theta)$ and $\cos(4\theta)$ are treated
using the corresponding Chebyshev polynomials. Here we obtain -- using CAD -- that $\tfrac25$
is the largest value taken by~\eqref{eq:largerev}, so we obtain $C_A=\tfrac25$. 

\subsection{A standard $P^2$-discretization}

Besides the spline functions, there is another possibility of setting up a discretization
based on polynomials of degree $2$, which is even more popular in finite elements:
we define $V_k$ to be the space of continuous functions that are piecewise polynomials
of degree $2$. Here, we can introduce a nodal basis, i.e., a basis where each basis
function is associated to node (this basis function takes the value $1$ on that node
and the value $0$ on all other nodes). Here, the nodes are allocated on the ends
of the intervals (as for the Courant element) and, additionally, on the midpoints of
the elements. Here, we have two types of basis functions, cf. Fig.~\ref{fig:3}
and Fig.~\ref{fig:3a} for visualizations.
\begin{figure}
\begin{center}
  \includegraphics[scale=.5]{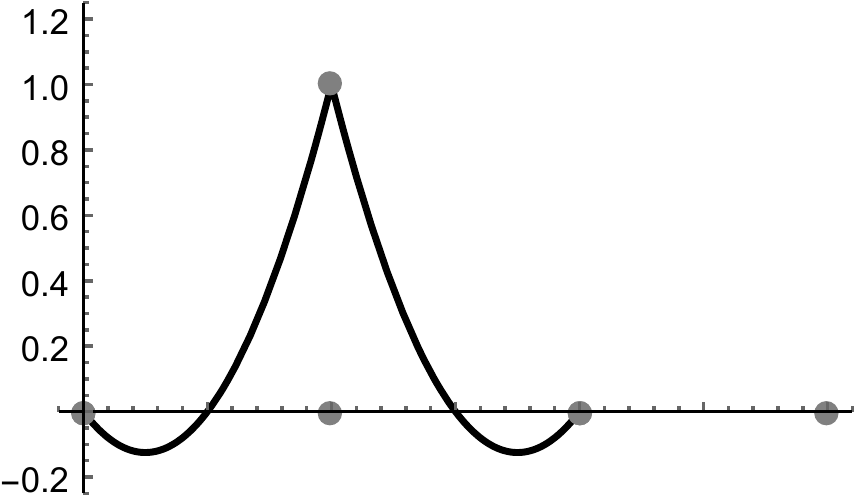}
\end{center}
\caption{Basis functions of the first kind of the $P^2$-discretization}\label{fig:3}
\end{figure}
\begin{figure}
\begin{center}
  \includegraphics[scale=.5]{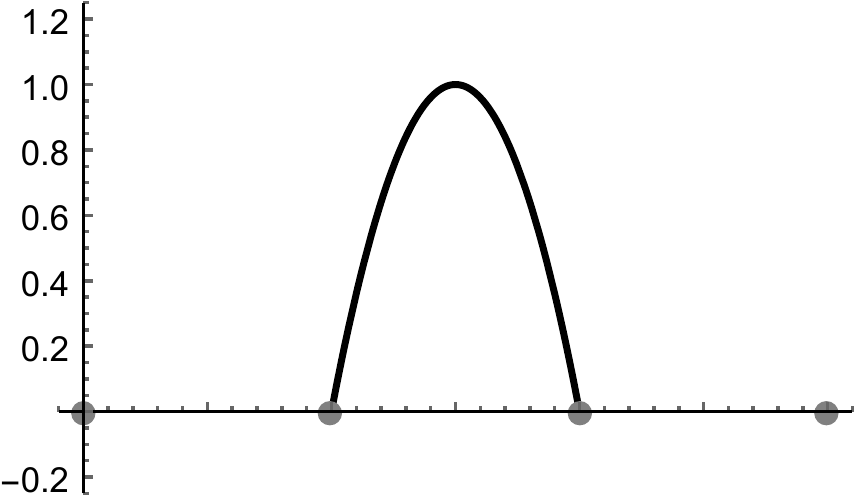}
\end{center}
\caption{Basis functions of the second kind of the $P^2$-discretization}\label{fig:3a}
\end{figure}

Because there are two types of elements, the mass matrix has alternating coefficients, see
the Mathematica notebook for details:
\begin{equation*}
  M_k = \frac{h_k}{30}
    \left(
      \begin{array}{cccccccc}
	  \dddots & \dddots & \dddots \\
	  \dddots & 8  & 2  & -1 \\
	  \dddots & 2  & 16 &  2  & 0\\
	          & -1 & 2  &  8 &  2 & -1 \\
	          &    &  0 &  2 & 16 &  2 & 0\\
	          &    &    & -1 &  2 &  8 & 2 & \dddots\;\\
	          &    &    &    &  0 &  2 & 16 & \dddots\; \\
	          &    &    &    &    &  \dddots  &  \dddots & \dddots\;
      \end{array}
    \right).
\end{equation*}
For determining the symbol of $M_k$, we rewrite $M_k$ as a sum of a band-matrix and of a
residual matrix with alternating signs:
\begin{align*}
 & M_k = A_k+B_k,
\end{align*}
where $A_k=(a_{i,j})_{i,j\in\mathbb{Z}}$ is a band matrix with $a_{i,i}=\tfrac{2}{5}h_k$, 
$a_{i,i\pm1}=\tfrac{1}{15}h_k$ and $a_{i,i\pm2}=-\tfrac{1}{60}h_k$ and $B_k$ is a matrix with
alternating coefficients:
\begin{align*}
 & B_k:=  \frac{h_k}{60} \left(
      \begin{array}{ccccccc}
  \dddots & \dddots & \dddots \\
  \dddots & -8 & 0  &-1\\
  \dddots &  0 & 8  & 0  &1\\
	  & -1  & 0  & -8 & 0  &-1\\
	  &    &1  & 0  & 8  & 0  & \dddots\;\; \\
	  &    &    & -1  & 0  & -8 & \dddots\;\;  \\
	  &    &    &    & \dddots & \dddots & \dddots\;\;
      \end{array}
    \right).
\end{align*}
Based on this decomposition, we can find the symbol. The symbol
of $A_k$ is obviously just
\begin{align*}
    \widehat{A_k}(\theta)&=\frac{h_k}{60}(-\ee^{-2\theta\ii}+4\ee^{-1\theta\ii}+24 +4\ee^{\theta\ii}-\ee^{2\theta\ii}).
\end{align*}
The symbol corresponding to $B_k$ is determined as follows:
\begin{align*}
    B_k \ul{\phi}_k(\theta) & = ( 2 (-1)^j \ee^{j\theta\ii} + (-1)^j ( \ee^{(j+2)\theta\ii} + \ee^{(j-2)\theta\ii} )   )_{j \in \mathbb{Z}} \\
	  & = ( 2 \ee^{j(\theta+\pi) \ii} + ( \ee^{2\theta\ii} + \ee^{-2\theta\ii} ) \ee^{j(\theta+\pi) \ii}   )_{j \in \mathbb{Z}} \\
	  & = \underbrace{(2+\ee^{2\theta\ii} + \ee^{-2\theta\ii})}_{\widehat{B_k}(\theta):=}  \ul{\phi}_k(\theta+\pi).
\end{align*}
So, we obtain
\begin{align*}
    M_k \ul{\phi}_k(\theta) = \widehat{A_k}(\theta)\ul{\phi}_k(\theta) + \widehat{B_k}(\theta)\ul{\phi}_k(\theta+\pi)
\end{align*}
and, as $\theta+2\pi \eqsim \theta$, also
\begin{align*}
    M_k \ul{\phi}_k(\theta+\pi) = \widehat{B_k}(\theta+\pi)\ul{\phi}_k(\theta) + \widehat{A_k}(\theta+\pi)\ul{\phi}_k(\theta+\pi).
\end{align*}
This shows, that $M_k$ does not preserve a one dimensional linear span anymore, but a two-dimensional span,
spanned by $\ul{\phi}_k(\theta)$ and $\ul{\phi}_k(\theta+\pi)$. This is similar to the coarse-grid operator in the last
section and in the last two subsections. So, the symbol is a representation of $M_k$ with respect to the basis
formed by these two vectors:
\begin{align*}
    \widehat{M_k}(\theta) &= \left(
	\begin{array}{cc}
	      \widehat{A_k}(\theta)&\widehat{B_k}(\theta)\\
	      \widehat{B_k}(\theta+\pi)&\widehat{A_k}(\theta+\pi)
	\end{array}
    \right).
\end{align*}
The symbol $\widehat{K_k}(\theta)$ of the stiffness matrix $K_k$ can be determined completely analogous.

Also the symbol of the prolongation operator can be determined similarly to the cases of the last
sections. However, we need four frequencies to be able to reconstruct a function on the coarse grid, so we
use the ansatz
$%\begin{equation*}
      \phi_{k-1}(2\theta,x) = \sum_{j=0}^3 A_j \phi_{k}(\theta+ j\pi/2,x),
$%\end{equation*}
where it is again sufficient to consider the values on the nodes (midpoints and end points of the
intervals). This can be used determine the coefficients $A_0$, $A_1$, $A_2$ and $A_3$.

For all $\theta = [0,2\pi)$, the prolongation operator $P_{k-1}$ maps the linear
span, spanned by
\begin{equation}\label{eq:basis0a}
  \ul{\phi}_{k-1}(2\theta)\quad \mbox{and}\quad  \ul{\phi}_{k-1}(2\theta+\pi)
\end{equation}
to the linear span, spanned by
\begin{equation}\label{eq:basisa}
  \ul{\phi}_k(\theta),\quad \ul{\phi}_k(\theta+\pi/2),\quad \ul{\phi}_k(\theta+\pi) \quad \mbox{and}\quad  \ul{\phi}_k(\theta+3\pi/2),
\end{equation}
and the restriction operator $P_{k-1}^T$ maps the linear
span, spanned by~\eqref{eq:basisa}, to the linear span, spanned by~\eqref{eq:basis0a}.
So, the symbol $\widehat{P_{k-1}}$ is a $2\times4$-matrix, for details we refer to the Mathematica notebook.
Based on the symbols of the individual components, we can again compute $\widehat{\mc{G}_{k+1}}(\theta)$, the
symbol of the overall operator. The eigenvalues of this matrix are $0,0,\tfrac1{30}$ and
$\tfrac{1}{10}$, so we obtain $C_A=\tfrac1{10}$.

So, we have seen that the constant $C_A$ takes the value $\tfrac13$ for the Courant element,
the value $\tfrac25$ for the $P^2$-spline discretization and $\tfrac1{10}$ for the standard $P^2$
discretization.

This indicates that the standard $P^2$ discretization has the best approximation properties.
However, the standard $P^2$ discretization needs two degrees of freedom per element, while
the other two discretizations need, each, one degree of freedom per element.
By defining $\hat{h}_k$ to be the distance between two nodes, i.e.,
$\hat{h}_k=\tfrac12h_k$ for the standard $P^2$-discretization 
and $\hat{h}_k=h_k$ for the other two discretizations,
we can redefine the approximation error estimate as follows:
\begin{equation}\nonumber
	\|u -\Pi_k u\|_{L^2(\Omega)}^2\le \hat{C}_A \hat{h}_k^2 |u|_{H^1(\Omega)}^2.
\end{equation}
Here, we obtain $\hat{C}_A= \tfrac13$ for the Courant element and $\hat{C}_A=\tfrac25$
for both of the quadratic discretizations.

As we have already mentioned, an extension to two dimensions is possible, however the terms get much
more complicated. We refer to the complementary material, where we made an attempt to
generalize the analysis to two dimensions.

\section{Concluding remarks}
We have seen that the terms that are constructed using LFA can be
treated well using symbolic computation, particularly using CAD.
Moreover, we have seen that the method of LFA
can be applied in a wide range of problems. Besides is application to multigrid solvers,
which is well studied in literature, cf.~\cite{Brandt:1977,Brandt:1994,Trottenberg:2001}, LFA
can be applied to other problems occurring in numerical analysis, like the computation of
approximation error estimates.

\bibliographystyle{amsplain}
\bibliography{literature}

\end{document}